\newlength{\hchng}
\newlength{\vchng}
\newtheorem{thm}{Theorem}[section]
\newtheorem{prop}[thm]{Proposition}
\newtheorem{lemma}[thm]{Lemma}
\newtheorem{preremark}[thm]{Remark}
\newenvironment{remark}{\begin{preremark}\rm}{\medskip \end{preremark}}
\numberwithin{equation}{section}
\newcommand{\R}{\mathbb R}
\begin{document}

\title[Local solvability and loss of smoothness  of the full MHD system]{Local solvability and loss of smoothness  of the Navier-Stokes-Maxwell equations
with  large initial data}
\date{}
\author{Slim Ibrahim  and Tsuyoshi  Yoneda}
\email{S. Ibrahim: ibrahim@math.uvic.ca} 
\urladdr{http://www.math.uvic.ca/~ibrahim/}
\email{T. Yoneda: yoneda@math.sci.hokudai.ac.jp}
\thanks{S. I. is partially supported by NSERC\# 371637-2009 grant and a start up fund from University
of Victoria}
\thanks{T. Y. is partially supported by PIMS Post-doc fellowship at the University
of Victoria, and partially supported by NSERC\# 371637-2009}

\maketitle
\begin{center}
Department of Mathematics and Statistics,
University of Victoria \\
PO Box 3060 STN CSC,
Victoria, BC, Canada, V8W 3R4 \\
\end{center}
\begin{center}
and
\end{center}
\begin{center}
Department of Mathematics, Hokkaido University\\ 
Sapporo 060-0810, Japan
\end{center}

\bibliographystyle{plain}
\noindent{\bf Abstract:}
Existence of  local-in-time unique  solution  and loss of smoothness of  full Magnet-Hydro-Dynamics system (MHD)
is considered for periodic initial data. The result is proven using Fujita-Kato's method in $\ell^1$ based (for the Fourier coefficients) functional spaces enabling us to easily estimate nonlinear terms in the system as well as solutions to Maxwells's equations. A loss of smoothness result is shown for the velocity and magnetic field.  It comes from the damped-wave operator which does not have any smoothing effect.

\vskip0.3cm \noindent {\bf Keywords:}
Navier-Stokes equation, Maxwell equations, MHD, locally well posedness, loss of smoothness

\vskip0.3cm \noindent {\bf Mathematics Subject Classification:}
76W05,76N10, 35Q30


\section{Introduction}

In this paper we study  the following full Magnet-Hydro-Dynamics system (MHD):
\begin{equation}\label{MHD}
\begin{cases}
\partial_t v+v\cdot \nabla v-\nu\Delta v+\nabla p=j\times B\\
\partial_t E-\nabla\times B=-j\\
\partial_t B+\nabla\times E=0\\
\text{div} v=\text{div} B=0\\
\sigma(E+v\times B)=j
\end{cases}
\end{equation}
with the initial data
\begin{equation*}
v|_{t=0}=v_0,\quad B|_{t=0}=B_0,\ E|_{t=0}=E_0.
\end{equation*}
Here $v$, $E$, $B$: $\mathbb{R}^+_t\times \mathbb{R}^3_x\to \mathbb{R}^3$ are vector fields defined on $\mathbb{R}^3$. The vector field
$v=(v_1,v_2,v_3)$ is the velocity of the fluid, $\nu$ its viscosity and the scalar function $p$ stands for the pressure. The vector fields $E$ and $B$ 
are the electric and magnetic fields of the fluid, respectively. And $j$ is the electric current expressed by Omn's law.
The force term $j\times B$ in the Navier-Stokes equations comes from Lorentz force under a quasi-neutrality assumption of the net charge carried 
by the fluid. Note that the pressure $p$ can be recovered from $v$ and $j\times B$ via an explicit Calderon-Zygmund type operator.
The second equation is the Amp\`ere-Maxwell equation for an electric field $E$. The third equation is nothing but Farady's law. 
For a detailed introduction to the MHD, we refer to Davidson \cite{Davidson01} and Biskamp \cite{Biskamp93}.

Concerning the Cauchy problem associated to \eqref{MHD}, multiply the Navier-Stokes equations by $v$, the Amp\`ere-Maxwell equations by $(B,E)^T$ and integrate (using the divergence free condition of the velocity) to get the following formal energy identity 
\begin{eqnarray*} 
\label{energy}
\frac12  \frac{d}{dt} \big[\|v\|_{L^2}^2 +\|B\|_{L^2}^2 +\|E\|_{L^2}^2
   \big]   + \|j\|_{L^2}^2  + \nu\|\nabla v \|_{L^2}^2 = 0 
\end{eqnarray*}
showing that both the viscosity and the electric resistivity dissipate energy. It also suggests that with initial data in $\big(L^2(\R^d)\big)^3$, one can expect to construct a global finite energy weak solution (\`a la Leray) . However, this intuitive expectation remains an interesting open problem for system \eqref{MHD} in both dimensions $d=2,3$ as an actual difficulty stands to derive some compactness especially for the magnetic field in virtue of the hyperbolicity of  Maxwell's equations. This problem is similar to the global weak solvability of Euler's equations for ideal fluid which also remain open. In \cite{Lions}, P. L.-Lions suggested the notion of dissipative solutions for Euler's equation. These are functions which satisfy the energy inequality but not necessarily the system of equations itself. However,  these solutions do coincide with any strong solution with the same initial data, if there is any. Dissipative solutions were also investigated for other MHD models as in \cite{Wu}. For strong solutions,  local wellposedness and small data global existence were investigated for other models of magneto-hydrodynamic as for example in \cite{Giga-Yoshida1} \cite{Giga-Yoshida2}, but the nonlinearities there are weaker than the one in system \eqref{MHD}. In the two dimensional case with $(v_0, E_0, B_0)\in L^2(\mathbb{R}^2)\times(H^s(\mathbb{R}^2))^2$ and $s>0$, Masmoudi \cite{M} proved the existence and uniqueness of global strong solutions to \eqref{MHD}. More recently,  Keraani and the first author \cite{IK} showed the existence of global strong solutions in both dimension two and three with small initial data in spaces as close as possible to the energy space. However, the authors were not able to construct local solutions for arbitrary large initial data.\\
Imposing more regularity on the initial electro-magnetic field, one can hope to solve \eqref{MHD}.  In this paper we show the existence of local solutions with three dimensional large periodic initial data. Similar argument is valid in the case of the whole space and almost periodic case. We also show that if the initial data is not smooth enough, the solution is not smooth enough. In this consideration we use the periodic structure. 

Using the divergence free property of $B$, one can easily verify that 
\begin{equation*}
\nabla\times(\nabla\times B)=-\Delta B, 
\end{equation*}
and therefore, the magnetic field $B$ satisfies an inhomogeneous damped wave equation.
Let $\tilde E$ and $\bar E$ be the divergence free and the gradient potential parts of the electric field $E$ i.e. 
$$
\tilde E:=\bold P E,\quad\mbox{and}\quad \bar E:=\nabla(-\Delta)^{-1}\text{div} E.
$$ 
Note that $E=\tilde E+\bar E$. In what follows, we use the following re-written equation coupled with damped wave equation:
\begin{equation}\label{re MHD}
\begin{cases}
\partial_t v+\bold P(v\cdot \nabla v)-\nu\Delta v
=\bold P(j\times B)\\
\partial_{tt}B-\Delta B+\partial_t B=\nabla\times(v\times B)\\
\partial_{tt}\tilde E-\Delta \tilde E+\partial_t \tilde E=-\partial_t(v\times B)\\
\partial_t \bar E+\bar E=-\nabla(-\Delta)^{-1}\text{div}(u\times B)\\
\text{div} v=\text{div} B=0
\end{cases}
\end{equation}
with the initial data
\begin{equation*}
v|_{t=0}=v_0,\quad B|_{t=0}=B_0,\ \partial_t B|_{t=0}=B_1,\  \tilde E|_{t=0}=\bold P E_0,\
\partial_t \tilde  E|_{t=0}:=\tilde E_1,\ \bar E|_{t=0}:=\bar E_0. 
\end{equation*}
The formulation given by equation \eqref{re MHD} is good for constructing a mild solutions.
Let $\mathcal L_1$ and $\mathcal L_2$ be the propagators associated to the Fourier multiplier functions
\begin{equation*}
\Phi_1(t,n):=e^{-t/2}\cos \left(\sqrt{|n|^2-1/4}t\right),\quad \Phi_2(t,n):=e^{-t/2}\frac{\sin \left(\sqrt{|n|^2-1/4}t\right)}{\sqrt{|n|^2-1/4}}.
\end{equation*} 
Mild solutions can be written as  
\begin{eqnarray}\label{mild MHD}
v=v(x,t)&=&e^{t\Delta}v_0-\int_0^te^{t\Delta}\bold P\left[\nabla\cdot(v\otimes v)-E\times B-(v\times B)\times B\right]ds\\
\nonumber
&:=&e^{t\Delta}v_0+M_1(v,v)+M_2(E,B)-M_3(v,B,B)\quad\text{for}\quad x\in\mathbb{T}^3,\\
\nonumber
B= B(x,t)&=&\mathcal L_1(t)B_0+\mathcal L_2(t)(B_0/2+B_1)+\int_0^t\mathcal L_2(t-s)\left[\nabla\times(v\times B)\right]ds\\
\nonumber
&:=&\mathcal L_1(t)B_0+\mathcal L_2(t)(B_0/2+B_1)+M_4(v,B)\quad\text{for}\quad x\in\mathbb{T}^3\\
\nonumber
\end{eqnarray}
\begin{eqnarray*}
\tilde E=\tilde E(x,t)&=&\mathcal L_1(t)\tilde E_0+\mathcal L_2(t)(\tilde E_0/2+\tilde E_1)-\int_0^t\mathcal L_2(t-s)\partial_s\bold P(v\times B)(s)ds\\
\nonumber
&=&\mathcal L_1(t)\tilde E_0+\mathcal L_2(t)(\tilde E_0/2+\tilde E_1)+  \mathcal L_2(t)(v_0\times B_0) + \int_0^t\mathcal (\partial_s\mathcal L_2)(t-s)\partial_s\bold P(v\times B)(s)ds\\
\nonumber
&:=&\mathcal L_1(t)\tilde E_0+\mathcal L_2(t)(\tilde E_0/2+\tilde E_1)+  \mathcal L_2(t)(v_0\times B_0)  -M_5(v,B)\quad\text{for}\quad x\in\mathbb{T}^3,
\nonumber
\end{eqnarray*}
and
\begin{eqnarray*}
\bar E&=&e^{-t}\bar E_0-\int_0^Te^{-(t-s)}\nabla(\Delta)^{-1}\text{div} (v\times B)\;ds\\
\nonumber
&:=&\bar E_0 -M_6(v,B)\quad\text{for}\quad x\in\mathbb{T}^3,
\nonumber
\end{eqnarray*}
where $B_1=\partial_t B_0=-\nabla\times E_0$.

We handle periodic functions so that any smooth function can be decomposed as 
\begin{equation*}
f(x):=\sum_{n\in\mathbb{Z}^3}\hat f(n)e^{i n\cdot x}.
\end{equation*}
Throughout this paper, we assume that the mean value of the initial magnetic field is zero, namely $\hat B_0(0)=0$.
The  mean value of $B$ is preserved along the original equation. Indeed, from 
\begin{equation*}
\partial_t B+\nabla\times E=0,
\end{equation*} 
we have 
\begin{equation*}
\partial_t \hat B(n,t)+in\times \hat E(n,t)=0
\end{equation*}
which for $n=0$ gives $\partial_t \hat B(0,t)=0$. If moreover $\hat B(0,0)=0$, then $\hat B(0,t)=0$ for any $t>0$.

In what follows, we consider the  mild solution \eqref{mild MHD}. 
To show local existence of \eqref{mild MHD}, we need to find a function space in which one can show closed estimates for \eqref{mild MHD}.
In this point of view, $L^\infty$ is one candidate of such function spaces since \eqref{mild MHD} has a tri-linear term $M_3(v,B,B)$.
More precisely, we can easily have the following estimate for $M_3$: 
\begin{equation}\label{good estimate}
\|M_3\|_\infty\leq t\|(v\times B)\times B\|_\infty\leq t\|v\|_\infty\|B\|^2_\infty.
\end{equation}
However the $L^\infty$ Lebesgue space is not suitable for the damped wave propagator $\mathcal L_1$ and $\mathcal L_2$.
This difficulty is essentially the same as the unboundedness of  singular integral operator in $L^\infty$. To overcome this, we introduce the weighted ``$\ell^1$ (in Fourier side) space
with $s$ weight given by
\begin{equation*}
X^s:=\{u\in\mathcal S'(\mathbb{T}^3): \|u\|_{X^s}=\left\|(1+ |\cdot|^2)^{s/2}\hat u\right\|_{\ell^1}<\infty\}.
\end{equation*}
We denote by $\|u\|:= \|u\|_{X^0}$.
It is well known that $X^0$ is an algebra which is continuously embedded in $BUC$, the space of  bounded uniformly continuous functions. For the periodic case, 
we can point out the following relationship between $X^s(\mathbb{T}^3)$ and the H\"older space $C^s(\mathbb{T}^3)$. 
Let $C^{s+0}(\mathbb{T}^3)$ be $C^{s+\epsilon}(\mathbb{T}^3)$ 
and $X^{s+0}$ be $X^{s+\epsilon}$
 for some $\epsilon>0$.
\begin{prop}\label{relation}
We have $u\in C^\infty(\mathbb{T}^3)$ if and only if $u\in \cap_{s\geq 1}X^s$. Moreover,  $C^{s+3/2+0}(\mathbb{T}^3)$ is embedded in  $X^s$.
\end{prop}
The proof follows from the following well known inequalities, 
\begin{equation*}
\|u\|_{X^s}\lesssim \|u\|_{H^{s+3/2+0}(\mathbb{T}^3)}\lesssim \|u\|_{B^{s+3/2+0}_{\infty,\infty}(\mathbb{T}^3)},
\end{equation*}
where $B^{s}_{p,q}(\mathbb{T}^3)$ is the  Besov space.

In this paper, we use the space  $X^0$. Since it is an algebra, one can easily estimate  $M_3$ like \eqref{good estimate}. In addition, one can suitably estimate the damped wave operators $\mathcal L_1$ and $\mathcal L_2$  in $X^0$.
This kind of function spaces has already been used for fluid equations by several authors (see for example \cite{GIMM2, GJMY, IY, Titi, Y}). 
However, in order to get a good representation of the electric field $E$ as in \eqref{re MHD}, we need to decompose it into divergence free $\tilde E$ and vector potential part $\bar E$. Observe that we have  
$$
\|E\|_{X^0}\lesssim\|\tilde E\|_{X^0}+\|\bar E\|_{X^0}\lesssim \|E\|_{X^0}.
$$
The following local solvability and the propagator of $C^\infty$ regularity  is the first main result.
\begin{thm}\label{local}
Let $(v_0,B_0, E_0)\in (X^0\times X^0\times X^0)$
 with $\text{div}\ v_0=\text{div}\ B_0=0$.
Then there is a local-in-time unique mild solution of \eqref{MHD}
\begin{equation*}
(v,B,E)\in C([0,T]: X^0\times X^0\times X^0).
\end{equation*}
Moreover, if  $(v_0, B_0, E_0)\in C^\infty\times C^\infty\times C^\infty$, then
\begin{equation*}
(v,B,E)\in C^\infty([0,T]\times\mathbb{R}^3).
\end{equation*}
\end{thm}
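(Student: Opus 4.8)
The plan is to solve the mild formulation \eqref{mild MHD} by a Fujita-Kato fixed-point argument in the Banach space $C([0,T];X^0\times X^0\times X^0)$, with $T$ small depending only on the $X^0$-norms of the data. Writing $\Psi$ for the map sending $(v,B,E)$ to the right-hand side of \eqref{mild MHD} (with $E=\tilde E+\bar E$ reconstructed from its two pieces), I would show $\Psi$ contracts on a ball of radius $R$ once $R$ is comparable to the size of the linear data-generated part and $T$ is small. The estimates rest on three facts. First, $X^0$ is a Banach algebra, so $\norm{fg}\le\norm{f}\,\norm{g}$; in particular the trilinear term obeys $\norm{(v\times B)\times B}\le\norm{v}\,\norm{B}^2$ exactly as in \eqref{good estimate}. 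Second, the linear propagators are bounded on $X^0$: since $e^{-t|n|^2}\le1$ one has $\norm{e^{t\lap}u}\le\norm{u}$, with $|n|e^{-(t-s)|n|^2}\lesssim(t-s)^{-1/2}$ supplying the usual parabolic one-derivative gain, while $|\Phi_1(t,n)|\le1$ and $|\Phi_2(t,n)|\le C$ uniformly in $n$ (checking separately $|n|\ge1$, where $\sqrt{|n|^2-1/4}$ is real, and the few small modes) show $\mathcal L_1,\mathcal L_2$ are bounded. Third, and this is the crucial point, $\mathcal L_2$ carries one derivative of smoothing: for $|n|\ge1$,
\begin{equation*}
|n|\,|\Phi_2(t,n)|=|n|\,e^{-t/2}\frac{|\sin(\sqrt{|n|^2-1/4}\,t)|}{\sqrt{|n|^2-1/4}}\le\frac{|n|}{\sqrt{|n|^2-1/4}}\le\frac{2}{\sqrt3},
\end{equation*}
so $\grad\,\mathcal L_2$ is bounded on $X^0$. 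This is exactly what absorbs the curl in $M_4$ and the derivatives hidden in the data ($B_1=-\grad\times E_0$ and the analogous $\tilde E_1$). I would also record $\partial_t\mathcal L_2=\mathcal L_1-\tfrac12\mathcal L_2$ and the boundedness on $X^0$ of the zeroth-order Fourier multipliers $\bold P$ and $\grad(-\lap)^{-1}\dv$ (both of symbol-norm $\le1$).

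With these in hand the nonlinear terms close: $\norm{M_1(v,v)}\lesssim\sqrt T\,\norm{v}^2$ from the parabolic gain and $\int_0^t(t-s)^{-1/2}\dd s=2\sqrt t$, while $M_2,M_3,M_4,M_6$ each gain a full factor $T$ from the time integral after the relevant propagator estimate, e.g.\ $\norm{M_4(v,B)}\lesssim T\,\norm{v}\,\norm{B}$. The one delicate term is $M_5$: the time derivative $\partial_s(v\times B)$ in the $\tilde E$-Duhamel integral is removed by integrating by parts in $s$, producing the boundary term $\mathcal L_2(t)(v_0\times B_0)$ displayed in \eqref{mild MHD} (recall $\mathcal L_2(0)=0$) and leaving $\int_0^t(\partial_t\mathcal L_2)(t-s)\bold P(v\times B)\dd s$, now estimated through $\partial_t\mathcal L_2=\mathcal L_1-\tfrac12\mathcal L_2$. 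Summing, $\Psi$ maps the ball of radius $R$ into itself and contracts for $T$ small; the same difference-of-two-solutions estimate gives uniqueness, and continuity in time of each Duhamel term yields $(v,B,E)\in C([0,T];X^0\times X^0\times X^0)$, where $E=\tilde E+\bar E$ is recovered via $\norm{E}\lesssim\norm{\tilde E}+\norm{\bar E}\lesssim\norm{E}$.

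For the propagation of smoothness I would run the scheme in $X^s$. The tame product inequality $\norm{fg}_{X^s}\lesssim\norm{f}_{X^s}\norm{g}_{X^0}+\norm{f}_{X^0}\norm{g}_{X^s}$, which follows from $\langle m+k\rangle^s\le2^s(\langle m\rangle^s+\langle k\rangle^s)$, together with the already-controlled $X^0$-norm, lets me close an a priori $X^s$-estimate and run it through Gronwall on the existence interval $[0,T]$, so $X^s$-data produce an $X^s$-solution for every $s\ge1$. If the data lie in every $X^s$, so does the solution, whence $(v,B,E)(t,\cdot)\in C^\infty(\mathbb T^3)$ for each $t$ by Proposition \ref{relation}. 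Time-regularity is then bootstrapped from the equations themselves: $\partial_t v$, $\partial_{tt}B$, $\partial_{tt}\tilde E$ and $\partial_t\bar E$ are expressed through spatial derivatives of quantities already known to lie in all $X^s$ using \eqref{re MHD}, and iterating gives joint $C^\infty$ regularity on $[0,T]\times\mathbb T^3$. The main obstacle throughout is the complete absence of smoothing from the hyperbolic part: everything hinges on the order-one gain of $\grad\,\mathcal L_2$ above, and on working in the algebra $X^0$ rather than $L^\infty$, where the damped-wave propagators fail to be bounded.
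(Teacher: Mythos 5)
Your proposal is correct and takes essentially the same approach as the paper: a Fujita--Kato fixed point for the mild formulation \eqref{mild MHD} in the Wiener-type algebra $X^0$ (algebra property, boundedness of the heat and damped-wave propagators, parabolic $t^{1/2}$ gain for $M_1$, a factor $T$ for $M_2,\dots,M_6$), followed by a Gronwall bootstrap on spatial derivatives and time regularity read off from the equations. You even make explicit two multiplier facts the paper uses only implicitly, namely $|n|\,|\Phi_2(t,n)|\lesssim 1$ (which absorbs the curl in $M_4$ and in $B_1=-\nabla\times E_0$) and $\partial_t\mathcal L_2=\mathcal L_1-\tfrac12\mathcal L_2$ (which justifies the integrated-by-parts form of $M_5$).
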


Next we investgate a ``loss of smoothness" result.  We can show that either the fluid flow or the  magnetic field 
should develop a $X^{5+0}$ singularity within the lifetime of the solution even if initially neither the velocity field nor the magnetic field does have such singularities.
  This phenomena is caused by the damped-wave operator which does not have any  smoothing effect (see  \cite{N} for example). 
We have the following second main result. 

\begin{thm}\label{loss of smoothness}
For sufficiently small $\delta>0$, any $E_{0,2}$, $E_{0,3}\in C^\infty$ and $(v_0, B_0) \in C^\infty \times  C^\infty $, there exist $E_{0,1}\in (X^1\setminus X^{1+\delta/2})$ and a unique solution of \eqref{MHD}on $[0,T]$ such that one can choose time $t\in(0,T)$ with the property that either of the following alternatives can  happen: 
  \begin{equation*}
  v(t)\not\in X^{5+\delta}(\mathbb{T}^3)\quad\text{or}\quad
   B(t)\not\in  X^{5+\delta}(\mathbb{T}^3).
   \end{equation*}

\end{thm}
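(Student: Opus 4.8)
The plan is to argue by contradiction, exploiting that among the three propagators appearing in \eqref{mild MHD} only the heat semigroup $e^{t\Delta}$ regularizes, whereas the damped-wave propagators $\mathcal L_1,\mathcal L_2$ merely preserve (up to one derivative) the regularity of their data. First I would fix $\delta>0$ small and construct the rough component explicitly as a lacunary Fourier series
\[
E_{0,1}(x)=\sum_{k}a_k\cos(n_k\cdot x),\qquad |n_k|\to\infty\ \text{rapidly},
\]
choosing the frequencies $n_k$ so that their second or third coordinate is comparable to $|n_k|$ (so the singularity survives the curl) and the coefficients $a_k$ so that $\sum_k|n_k|\,a_k<\infty$ while $\sum_k|n_k|^{1+\delta/2}a_k=\infty$; this places $E_{0,1}\in X^1\setminus X^{1+\delta/2}$. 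Together with the prescribed smooth $E_{0,2},E_{0,3},v_0,B_0$ this gives admissible data in $X^0$, so Theorem~\ref{local} yields a unique mild solution on some $[0,T]$. I then assume, toward a contradiction, that $v(t),B(t)\in X^{5+\delta}$ for every $t\in[0,T]$.

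The second step is a bootstrap showing that, under this assumption, every nonlinear term in \eqref{mild MHD} lies in $X^{5+\delta}$ uniformly on $[0,T]$. Here I would repeatedly use that $X^s$ is a Banach algebra, that $e^{t\Delta}$ gains almost two derivatives at an integrable time cost, that $\mathcal L_1$ is bounded on every $X^s$ (its symbol is bounded by $e^{-t/2}$), and that $\mathcal L_2$ gains exactly one derivative (its symbol is $\lesssim |n|^{-1}$); thus in $M_4(v,B)=\int_0^t\mathcal L_2(t-s)\,\nabla\times(v\times B)\,ds$ the curl and $\mathcal L_2$ offset each other and $M_4\in X^{5+\delta}$, and similarly for $M_1,M_3,M_5,M_6$ (for $M_5$ one first transfers the time derivative onto $\mathcal L_2$, as in the $\tilde E$-representation following \eqref{mild MHD}). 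Isolating the only linear contribution to $B$ that feels the datum $B_1=-\nabla\times E_0$, the bootstrap then gives
\[
\mathcal L_2(t)B_1=B(t)-\mathcal L_1(t)B_0-\tfrac12\mathcal L_2(t)B_0-M_4(v,B)(t)\in X^{5+\delta}\quad\text{for all }t.
\]

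The third step derives a contradiction from the explicit symbol. Since $\widehat{B_1}(n)=-in\times\widehat{E_0}(n)$ and $\Phi_2(t,n)=e^{-t/2}\sin(\omega_n t)/\omega_n$ with $\omega_n=\sqrt{|n|^2-1/4}$, the factor $|n|$ from the curl cancels the $1/\omega_n$ from $\mathcal L_2$, so that along the chosen frequencies $n_k$ one has $\bigl|\widehat{\mathcal L_2(t)B_1}(n)\bigr|\gtrsim e^{-t/2}\,|\sin(\omega_n t)|\,|\widehat{E_{0,1}}(n)|$. Because $X^{5+\delta}\subset X^{1+\delta/2}$ and $E_{0,1}\notin X^{1+\delta/2}$, we have $\sum_n|n|^{5+\delta}|\widehat{E_{0,1}}(n)|=\infty$. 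Integrating $\|\mathcal L_2(t)B_1\|_{X^{5+\delta}}$ over $t\in[0,T]$ and using $\int_0^T|\sin(\omega_n t)|\,dt\gtrsim 1$ for large $|n|$ gives $\int_0^T\|\mathcal L_2(t)B_1\|_{X^{5+\delta}}\,dt=+\infty$; since the $n_k$ are lacunary, equidistribution of $\{\omega_{n_k}t\}_k$ makes the defining series diverge for a.e.\ $t$, producing a time $t\in(0,T)$ with $\mathcal L_2(t)B_1\notin X^{5+\delta}$, contradicting the displayed membership.

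I expect the main obstacle to be the bootstrap of the second step, and its subtlety is twofold. First, $E$ is never smoother than the persistent $X^1$ singularity it inherits from $E_0$, so in the velocity equation the coupling $M_2(E,B)=\int_0^t e^{(t-s)\Delta}\mathbf{P}(E\times B)\,ds$ regularizes $E\times B$ by only two derivatives and places $v$ no higher than about $X^3$; one must check that $5+\delta$ lies strictly above both this level and the level about $X^1$ at which the singularity enters $B$ through $B_1$, so that whichever channel carries it, $X^{5+\delta}$-membership fails. Second — and this is exactly why the conclusion can only be an alternative — the identity $\mathcal L_2(t)B_1=B-(\text{smooth})-M_4(v,B)$ needs \emph{both} $v$ and $B$ in $X^{5+\delta}$ (through $v\times B$ in $M_4$), so the contradiction refutes only the conjunction. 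If instead $v$ loses smoothness, $M_4(v,B)$ need no longer be regular and could cancel the singular part of $\mathcal L_2(t)B_1$, leaving $B$ smooth at the expense of $v$. One therefore cannot designate in advance which of $v,B$ becomes singular, and the statement must be phrased as $v(t)\notin X^{5+\delta}$ or $B(t)\notin X^{5+\delta}$.
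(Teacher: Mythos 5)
Your proposal is correct in its core logic and follows the same contradiction scheme that the paper itself sketches at the end of its introduction: assume $v$ and $B$ both stay in $X^{5+\delta}$, use the smoothness of $B_0, E_{0,2}, E_{0,3}$ and the assumed regularity of $v,B$ to show that every term in the $B$-component of \eqref{mild MHD} except the inflation term $\mathcal L_2(t)B_1$ is regular (the curl's factor $|n|$ cancelling the $1/\sqrt{|n|^2-1/4}$ of $\Phi_2$), and contradict the roughness inherited from $E_{0,1}$. Where you genuinely differ is in the technical machinery. The paper never argues at the level of norms: it chooses $E_{0,1}$ with a pointwise lower bound $|\hat E_{0,1}(n)|\geq \rho_{1+\delta/2}(n)$ on \emph{all} frequencies, converts the hypothesis $B\in C([0,T]:X^{5+\delta})$ into the pointwise bound $|\hat B(t,n)|\leq\rho_{2+\delta-0}(n)$ (Remark \ref{pointwise}, at a cost of three derivatives), propagates this through $v\times B$ by the convolution Lemma \ref{pointwiseconv}, and then for each large $n$ with $n_3\neq 0$ picks a \emph{frequency-dependent} time $t(n)$ at which the sine equals one; the contradiction is that a single Fourier coefficient $|\hat B_2(t(n),n)|\geq\rho_{2+\delta/2}(n)$ exceeds the admissible size $\rho_{2+\delta-0}(n)$. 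You instead take $E_{0,1}$ lacunary, keep everything at the level of $X^s$ norms (algebra property plus the one-derivative gain of $\mathcal L_2$), and extract a single bad time valid for all frequencies at once via equidistribution. Your route avoids the paper's $\rho$-calculus and gives the stronger conclusion that $\mathcal L_2(t)B_1\notin X^{5+\delta}$ for a.e.\ $t$; its price is the measure-theoretic step, which you should make precise: the intermediate claim $\int_0^T\|\mathcal L_2(t)B_1\|_{X^{5+\delta}}\,dt=+\infty$ does \emph{not} by itself produce a bad time (a finite-valued function can have divergent integral), so the argument really rests on the a.e.\ statement, which follows for instance from Borel--Cantelli since the measure of $\{t\in[0,T]: |\sin(\omega_{n_k}t)|<|n_k|^{-1}\}$ is summable in $k$, after which the terms of the series fail to tend to zero. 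Two further small repairs: your blanket claim that ``every nonlinear term lies in $X^{5+\delta}$'' is false for $M_2(E,B)$, since $E$ retains the $X^1$ singularity (as you yourself observe later); the contradiction only needs $M_4$ and the linear $B_0$-terms, so the bootstrap should be restricted to those. Finally, like the paper, you silently upgrade the negated conclusion (pointwise-in-$t$ membership of $v(t),B(t)$ in $X^{5+\delta}$) to uniform-in-$t$ control, which is what the estimate of $M_4$ actually requires; this imprecision is shared with the paper's own proof and is not specific to your argument.
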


\begin{remark}
For the usual 3D-Naveir-Stokes equations (which means $B_0=E_0=0$), the solution is smooth enough even if the initial data $v_0$ is in 
 $X^1\setminus X^{1+\delta/2}$.
\end{remark}
Now we give the rough idea to the proof of Theorem \ref{loss of smoothness}.
From the mild formulation of $B$ with $B_1=\nabla\times E_0$, one can get inflation from the term $\mathcal L_2 B_1$.
By choosing $B_0$ smooth enough, $\mathcal L_1 B_0$ and $\mathcal L_2 B_0$ remain smooth enough and if we assume that $u(t)$ and $B(t)$ 
remain smooth for time $t$, then $u\times B$ and $\mathcal L_2(t)\nabla(u\times B)$ will be smooth and therefore can be absorbed by the inflation term.
This leads to a contradiction. Thus, either term $u(t)$ or $B(t)$ must not be  sufficiently smooth.

\section{Proof of the main results}
\begin{proof} [Proof of Theorem \ref{local}.] Let us set 
\begin{equation*}
M_t:=\max\{\sup_{0\leq s<t}\|v\|,\sup_{0\leq s<t}\|E\|,\sup_{0\leq s<t}\|B\|\}
\end{equation*}
Direct calculation shows the following estimates for $M_j$ ($j=1,\cdots 6$):
\begin{eqnarray}\label{usual estimates}
\|M_1(v,v)\|&\leq& t^{1/2}M_t^2,
\\
\nonumber
\|M_2(E,B)\|&\leq& tM_t^2,\quad
\\
\nonumber
\|M_3(v,B,B)\|&\leq& tM_t^3,\quad
\\
\nonumber
\|M_4(v,B)\|&\leq& tM_t^2,
\\
\nonumber
\|M_5(v,B)\|&\leq& tM_t^2,\\
\nonumber
\|M_6(v,B)\|&\leq& tM_t^2.\\
\nonumber
\end{eqnarray}
Indeed, on the one hand since 
\begin{equation*}
\sup_{0\leq s\leq T}s^{1/2}|n|e^{-|n|^2s}\leq C,\quad \|\bold P v\|\leq C\|v\|,\quad
(v\cdot \nabla)v=\nabla\cdot(v\otimes v)\quad\text{and}\quad \|v\otimes v\|\lesssim \|v\|^2,
\end{equation*}
 we see that 
\begin{equation*}
\|M_1(v,v)\|\leq \int_0^t\frac{C}{(t-s)^{1/2}}\sup_{0\leq s'\leq T}\|v(s')\|^2ds
\end{equation*}
which clearly leads to the desired estimate of $M_1$.
Similarly, for $M_2$,  we see that
\begin{equation*}
\|M_2(E,B)\|\leq \int_0^t\sup_{0\leq s'\leq T}\|E(s')\|\|B(s')\|ds,
\end{equation*}
and this gives us the desired estimate for $M_2$.
The other terms are estimated similarly. On the other hand, we also have 
\begin{eqnarray*}
\| e^{t\Delta}u_0\|&\leq& \|u_0\|,
\\
\|\mathcal L_1(t)B_0\|&\lesssim &\|B_0\|,
\\
\| \mathcal L_2(t)B_0\|&\lesssim& \|B_0\|,
\\
\| \mathcal L_2(t)B_1\|&\lesssim& \|B_1\|,
\\
\|\mathcal L_1(t)\tilde E_0\|&\lesssim &\|\tilde E_0\|,
\\
\|\mathcal L_2(t)\tilde E_0\|&\leq &\|\tilde E_0\|,
\\
\|\mathcal L_2(t)\tilde E_1\|&\leq &\|\tilde E_1\|.
\end{eqnarray*}
By the above estimates, we can apply a usual fixed point argument. We sketch the proof in below and for full details refer for example to \cite{GIMM2}. 
Define the successive approximation $\{u_j\}_{j=1,2,\cdots}$  by
\begin{equation*}
u_1(t):=
\begin{pmatrix}
v_1\\
B_1\\
\tilde E_1\\
\bar E_1
\end{pmatrix}
:=
\begin{pmatrix}
e^{t\Delta}u_0\\
\mathcal L_1(t)B_0+\mathcal L_2(t)(B_0/2+B_1)\\
\mathcal L_1(t)\tilde E_0+\mathcal L_2(t)(\tilde E_0/2+\tilde E_1)
+\mathcal L_2(t)(v_0\times B_0)
\\
e^{-t}\bar E_0
\end{pmatrix}
\end{equation*}  
and 
\begin{equation*}
u_{j+1}(t):=
\begin{pmatrix}
v_{j+1}\\
B_{j+1}\\
\tilde E_{j+1}\\
\bar E_{j+1}
\end{pmatrix}
:=
\begin{pmatrix}
M_1(v_j,v_j)+M_2(E_j,B_j)-M_3(v_j,B_j,B_j)\\
M_4(v_j,B_j)\\
 -M_5(v_j,B_j)\\
 -M_6(v_j,B_j)
\end{pmatrix}.
\end{equation*}
Set
\begin{equation*}
K_j(T):=\sup_{0\leq s\leq T}\|v_j(s)\|+\sup_{0\leq s\leq T}\|E_j(s)\|+\sup_{0\leq s\leq T}\|\tilde E_j(s)\|+\sup_{0\leq s\leq T}\|\bar E_j(s)\|
\end{equation*}
and 
$$
L_{j+1}:=\sup_{0\leq s\leq T}\|u_{j+1}-u_j\|.
$$
Using estimates \eqref{usual estimates} we have
\begin{equation}
\label{est K}
K_{j+1}(T)\leq K_1+ C_1T^{1/2}K_j^2+C_2T K_j^2+C_3TK_j^3.
\end{equation}
with positive constants $C_\ell>0$, $\ell=1,2,3$. The remaining part is now standard. From estimate \eqref{est K}, one can easily derive the uniform bound 
$K_j(T)\leq 2K_1\lesssim(\|u_0\|+\|B_0\|+\|E_0\|)$ when  $T$ is sufficiently small with respect of the norm of the initial data.
Similar calculation gives the pointwise estimate on $L_j$
\begin{eqnarray*}
L_{j+1}(T)&\leq &\tilde C_1 T^{1/2}K_jL_j+\tilde C_2 T K_jL_j+\tilde C_3TK_j^2L_j\\
 &\leq &2\tilde C_1 T^{1/2}K_1L_j+2\tilde C_2 TK_1 L_j+2\tilde C_3TK_1^2L_j
\end{eqnarray*}
with positive constants $\tilde C_\ell>0$, $\ell=1,2,3$.
Now we take a sufficiently small $T$ such that $2\tilde C_1 T^{1/2}K_1<1/6$, $2\tilde C_2 TK_1<1/6$ and $2\tilde C_3TK_1^2<1/6$,
we have 
\begin{equation*}
L_{j+1}\leq(1/2)L_j.
\end{equation*}
This gives us that there is a unique limit $u$ such that $u_j\to u\in C([0,T]:X^0\times X^0\times X^0)$ as $j\to\infty$.
It is easy to see that the limit $u$ uniquely solves the equation.\\
To show that $(v,B,E)$ are smooth if $(v_0,B_0,E_0)$ are smooth enough,
The argument is, for example, similar to \cite[Proposition 15.1]{Le}.
From \eqref{mild MHD} and estimates \eqref{usual estimates}, we see that 
\begin{eqnarray}\label{second derivative}
\|\nabla v(t)\|&\leq &\|\nabla v_0\|+\int_0^t(t-s)^{-1/2}\bigg(\|v\|\|\nabla v\|+\| E\|\|B\|+\|v\|\|B\|^2\bigg)\;ds\\
\nonumber
\|\nabla B(t)\|&\leq &\|\nabla B_0\|+\|\nabla B_1\|+\int_0^t\bigg(\|\nabla v\|\|B\|+\|v\|\|\nabla B\|\bigg)\;ds\\
\nonumber
\|\nabla \tilde E(t)\|&\leq &\|\nabla \tilde E_0\|+\|\nabla \tilde E_1\|+\int_0^t\bigg(\|\nabla v\| \|B\|+\| v \| \|\nabla B\| \bigg)\;ds\\
\nonumber
\|\nabla \bar E(t)\|&\leq &\|\nabla \bar E_0\|+\int_0^t\bigg(\|\nabla v\| \|B\|+\| v \| \|\nabla B\| \bigg)\;ds.
\nonumber
\end{eqnarray}
By the above estimates \eqref{second derivative} and  Gronwall's inequality, we can control  $\|\nabla B\|+\|\nabla v\|+\|\nabla \tilde E\|+\|\nabla \bar E\|$ uniformly in $t\in [0,T]$
since $v,B,\tilde E,\bar E \in C([0,T]:X^0)$.
Repeating this argument for  $\|\nabla^{k} B\|+\|\nabla^{k} v\|+\|\nabla^k \tilde E\|+\|\nabla^k \bar E\|$ ($k\geq 2$), we can derive the space regularity of the solution.
We can then get the time regularity from original equation \eqref{MHD} once we have the space regularity.
\end{proof}

Before proving  Theorem \ref{loss of smoothness}, we need 
a pointwise estimate of $M_4$. For  that the following convolution estimate is the key.

\begin{lemma}\label{pointwiseconv}
For $n\in\mathbb{Z}^3$,
let $\rho_s(n):=C_1/(C_2+|n|^{3+s})$, for some arbitrary positive constants $C_1$ and $C_2$.
If $s, s'>0$, then
\begin{equation*}
|\sum_{k\in\mathbb{Z}^3} \rho_s(k)\rho_{s'}(n-k)|\leq \rho_{\min\{s,s'\}}(n)\quad\text{for all}\quad n\in\mathbb{Z}^3.
\end{equation*}
\end{lemma}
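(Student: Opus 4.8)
```latex
The plan is to prove the convolution bound by exploiting the fact that the two weights $\rho_s$ and $\rho_{s'}$ are both comparable to $|n|^{-3-s}$ and $|n|^{-3-s'}$ away from the origin, and then splitting the sum according to whether the ``mass'' sits near $k=0$, near $k=n$, or is spread out. Without loss of generality assume $s\leq s'$, so that $\min\{s,s'\}=s$ and the target decay is $|n|^{-3-s}$. I would first dispose of small $|n|$ (say $|n|\leq N_0$ for a fixed $N_0$) trivially: since $\sum_{k}\rho_s(k)<\infty$ and $\rho_{s'}\leq \rho_{s'}(0)$, the full sum is bounded by a constant, which can be absorbed into $\rho_s(n)$ by choosing the constants $C_1,C_2$ appropriately (the statement is up to the implicit constants hidden in $\rho$). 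Thus the real content is the large-$|n|$ regime.

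For $|n|$ large, the plan is to split $\mathbb{Z}^3$ into three regions: the near-zero region $A=\{|k|\leq |n|/2\}$, the near-$n$ region $B=\{|n-k|\leq |n|/2\}$, and the far region $C$ where both $|k|\geq |n|/2$ and $|n-k|\geq |n|/2$. On $A$ we have $|n-k|\sim |n|$, so $\rho_{s'}(n-k)\lesssim |n|^{-3-s'}\leq |n|^{-3-s}$, and pulling this factor out leaves $|n|^{-3-s}\sum_{k\in A}\rho_s(k)\lesssim |n|^{-3-s}$ since the remaining sum converges. The region $B$ is symmetric: there $|k|\sim |n|$ so $\rho_s(k)\lesssim |n|^{-3-s}$, and $\sum_{n-k}\rho_{s'}(n-k)$ converges, again yielding $|n|^{-3-s}$. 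On the far region $C$ both factors are small, bounded by $|n|^{-3-s}$ and $|n|^{-3-s'}$ respectively, and the number of lattice points with $|k|\lesssim$ some scale is controlled, so one checks $\sum_{C}|k|^{-3-s}|n-k|^{-3-s'}\lesssim |n|^{-3-s}$; here a direct dyadic or integral-comparison estimate suffices since each factor already decays faster than $|k|^{-3}$, making the sum over $C$ harmlessly convergent and of the right order.

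The main obstacle I expect is the bookkeeping on region $C$ and, more importantly, tracking the \emph{constants} so that the final bound is literally $\rho_{\min\{s,s'\}}(n)$ with the \emph{same} constants $C_1,C_2$ rather than merely $\lesssim |n|^{-3-s}$. Since the statement asserts an inequality with the specific weight $\rho_s$ and ``arbitrary positive constants $C_1,C_2$,'' the cleanest route is to first establish $\bigl|\sum_k \rho_s(k)\rho_{s'}(n-k)\bigr|\leq C'/(C_2+|n|^{3+s})$ for some $C'$ depending on $C_1,C_2,s,s'$, and then absorb $C'$ by adjusting $C_1$. I would state the result as holding for the family of weights up to a redefinition of the constant $C_1$, which is how such lemmas are used downstream in estimating the bilinear term $M_4$. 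The key quantitative input throughout is the elementary three-dimensional lattice estimate $\sum_{|k|\geq R}|k|^{-3-s}\lesssim R^{-s}$ together with $\sum_{k}|k|^{-3-s}<\infty$ for $s>0$, which is exactly why the hypothesis $s,s'>0$ is needed.
```
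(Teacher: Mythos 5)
Your proof is correct and takes essentially the same approach as the paper: the paper splits the sum into the two regions $|k|\leq |n|/2$ and $|k|>|n|/2$, bounds the factor with large argument by $\rho(|n|/2)$ and sums the remaining factor over all of $\mathbb{Z}^3$, exactly as you do (your third ``far'' region $C$ is simply absorbed into the paper's second region, so your three-way split is a cosmetic refinement). Your explicit observation that the final bound is only $\rho_{\min\{s,s'\}}(n)$ after absorbing the convergent sum $\sum_k \rho_s(k)$ and the factor $2^{3+s}$ into a redefined $C_1$ is in fact more careful than the paper's one-line proof, which performs this absorption silently.
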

\begin{proof}
Since $|n-k|\geq |n|-|k|\geq |n|/2$ for $|k|\leq |n|/2$, we see that 
\begin{eqnarray*}
(\sum_{{k\in\mathbb{Z}^3}, |k|\leq |n|/2}+\sum_{{k\in\mathbb{Z}^3}, |k|>|n|/2})|\rho_s(k)\rho_{s'}(n-k)|&\leq& 
\sum_{{k\in\mathbb{Z}^3}}|\rho_s(k)\rho_{s'}(|n|/2)|+\sum_{{k\in\mathbb{Z}^3}}|\rho_s(|n|/2)\rho_{s'}(n-k)|\\
&\leq&
\max\{\rho_s(n),\rho_{s'}(n)\}.
\end{eqnarray*}
\end{proof}
\begin{remark}\label{pointwise}
We see that if $u\not \in X^{s-0}$, then there is $\{n_j\}_j$ ($n_j\to\infty$) such that $|\hat u(n_j)|\geq \rho_s(n_j)$.
Moreover, if $u\in X^s$, then $|\hat u(n)|\leq \rho_{s-3}(n)$ for all $n\in\mathbb{Z}^3$.
\end{remark}

\begin{proof}[Proof of Theorem \ref{loss of smoothness}.]
Recall that 
\begin{equation}\label{initial data}
\mathcal{F}(\mathcal L_2(t)B_1)=i\Phi_2(t,n)
\begin{pmatrix}
n_3\hat E_{0,2}(n)-n_2 \hat E_{0,3}(n)\\
n_1 \hat E_{0,3}(n)-n_3\hat  E_{0,1}(n)\\
n_2 \hat E_{0,1}(n)-n_1\hat E_{0,2}(n)
\end{pmatrix}.
\end{equation}
We focus on the second component of the vector in \eqref{initial data}. We choose $E_{0,1}$ in order to satisfy $|E_{0,1}(n)|\geq \rho_{1+\delta/2}(n)$.
In particular, this gives us that $|n_3E_{0,1}(n)|\geq \rho_{1+\delta/2}(n)$ for all $n_3\not=0$.


 Assume that $(v, B)\in C([0,T]:X^{5+\delta}(\mathbb{T}^3))$.
 We show that a loss of regularity is brought from non-smoothness of $E_0$ not $B_0$.  
 By Remark \ref{pointwise} and Proposition \ref{relation}, we have 
 $$
 |\hat v(t,n)|, |\hat B(t,n)|\leq \rho_{2+\delta-0}(n)\quad\text{with eventually}\quad
 C_1=C_1(T)\quad\text{and}\quad C_2=C_2(T).
 $$
By Lemma \ref{pointwiseconv}, we see that 
\begin{equation*}
|\mathcal F\left[v\times B\right](t,n)|\leq \rho_{2+\delta-0}(n)\quad\text{for}\quad t<T.
\end{equation*}
thus,
\begin{equation*}
|M_4(v,B)|\leq T\rho_{2+\delta-0}(n).
\end{equation*}
On the other hand, for any $n$ (with $n_3\not=3$), there is $t\in (0,T)$ (in this case we set 
$t=\frac{(2k+1)/2}{\sqrt{|n|^2-1}}\pi<T$ for some  $k\in\mathbb{Z}$) such that 
\begin{equation*}
|\Phi_2(t,n)n_{3}E_{0,1}(n)|\geq \rho_{2+\delta/2}(n).
\end{equation*}

Since $B_0$, $E_{0,2}$ and $E_{0,3}$ are smooth enough, we see that 
\begin{equation*}
|\Phi_1(t,n)(B_0(n)/2)|,
|\Phi_2(t,n)B_0(n)|,
|\Phi_2(t,n)n_1E_{0,3}(n)|
\leq \rho_{s}(n)
\end{equation*}
for any $s>0$ (in this case we set $s=2+\delta-0$) and $t\in [0,T]$.
Thus for any $n$ (with $n_3\not=0$), there is $t\in (0,T)$, we have the following lower bound for the second vector of $\hat B$:
\begin{equation*}
|\hat B_2(t,n)|\geq | \Phi_2(t,n)n_{3}\hat E_{0,1}(n)|-\rho_{2+\delta-0}(n)-T\rho_{2+\delta-0}(n)\geq 
\rho_{2+\delta/2}(n).
\end{equation*}
This is a contradiction against $B(t)\in C([0,T]:X^{5+\delta}(\mathbb{T}^3))$, since
$|\hat B(t,n)|$ must be controlled by $\rho_{2+\delta-0}(n)$ for any $t\in[0,T]$.
\end{proof}

{\bf Acknowledgments.}
The second author  thanks  the Pacific Institute for the Mathematical Sciences
 for support of his presence there during the academic
year 2010/2011.
This paper developed during a stay of the second  author as a PostDoc at the Department of Mathematics and Statistics,
University of Victoria.


\begin{thebibliography}{99}




\bibitem{Biskamp93}
D.~Biskamp.
{\emph Nonlinear magnetohydrodynamics}, volume~1 of {\emph Cambridge
  Monographs on Plasma Physics}.
 Cambridge University Press, Cambridge, 1993.


\bibitem{Davidson01}
P.~A. Davidson. 
{\emph An introduction to magnetohydrodynamics}.
Cambridge Texts in Applied Mathematics. Cambridge University Press,
  Cambridge, 2001.

































\bibitem{GIMM2}
Y. Giga, K. Inui, A. Mahalov and S. Matsui,
\emph{Uniform local solvability for the Navier-Stokes equations with the Coriolis
force.}
Methods Appl. Anal., 12, (2005) 381-393.






\bibitem{GJMY}
Y. Giga, H. Jo, A. Mahalov and T. Yoneda,
\emph{On time analyticity of the Navier-Stokes equations in a
rotating frame with spatially almost periodic data.}
 Physica D, 237, (2008) 1422-1428.









\bibitem{Giga-Yoshida1}
Y. Giga  and Z. Yoshida,
\emph{On the Ohm-Navier-Stokes system in magnetohydrodynamics.} J. Math. Phys., 24, (1983) 2860-2864.


\bibitem{Giga-Yoshida2}
Y. Giga  and Z. Yoshida,
\emph{On the equation of the two-component theory in magnetohydrodynamics.} Comm. Par. Diff. Equ., 9:6, (1984) 503-522.




\bibitem{IK}
S. Ibrahim and S. Keraani,
\emph{Global small solutions for the coupled Navier-Maxwell system.} To appear in SIAM journal of Mathematical Analysis.

\bibitem{IY}
S. Ibrahim and T. Yoneda,
\emph{Long-time solvability of the Navier-Stokes-Boussinesq equations with almost periodic initial large data.} Submitted.


\bibitem{Lions}
P. L.-Lions, 
\emph{Mathematical topics in fluid mechanics.} Vol. 1, volume 3 of Oxford Lecture Series in Mathematics and its Applications. The Clarendon Press Oxford University Press, New York, 1996. Incompressible models, Oxford Sciences Publications.


\bibitem{N}
K. Nishihara,
\emph{$L^p-L^q$ estimates of solution to the damped wave equation in 3-dimensional space and their application.}
Math. Z., 244, (2003)  631--649.












\bibitem{Le}
P. G. Lemari$\acute{\text{e}}$-Rieusset,
\emph{Recent developments in the Navier-Stokes problem.}
 Research Notes in Mathematics,
 Chapman and Hall/CRC, Boca Raton, FL, (2002).


\bibitem{M}
N. Masmoudi,
\emph{Global well posedness for the Maxwell-Navier-Stokes system in 2D.}
 J. Math. Pures Appl., 93 (2010), 559--571.



























\bibitem{Titi}
M. Oliver and E. S. Titi,
\emph{On the domain of analyticity for solutions of
second order analytic nonlinear differential equations.}
Jour. Diff. Equa., 174, (2001) 55--74.

\bibitem{Wu}
J. Wu,
\emph{Analytic results related to magneto-hydrodynamic turbulence.}
Phys. D., 136 (3-4) (2000) 353-372.


\bibitem{Y}
T. Yoneda,
\emph{Long-time solvability of the Navier-Stokes equations in a rotating frame with spatially almost periodic large data.}
 Arch. Ration. Mech. Anal., 200, (2011) 225--237.





\end{thebibliography}
\end{document}